\numberwithin{equation}{section}
\newtheorem{thm}{Theorem}[section]
\newtheorem{lm}[thm]{Lemma}
\theoremstyle{remark}
\newtheorem*{rem*}{Remark}
\theoremstyle{definition}
\newtheorem*{df*}{Definition}
\newenvironment{entry}
{\begin{list}{X}%
  {%
      \setlength{\labelwidth}{55pt}%
      \setlength{\leftmargin}{\labelwidth}
      \addtolength{\leftmargin}{\labelsep}%
      \setlength{\itemsep}{.4pc}
   }%
}%
{\end{list}}
\newcounter{vremennyj}
\newcommand{\pbar}{\ensuremath{\bar{\partial}}}
\newcommand{\la}{\lambda}
\newcommand{\be}{\mathbf{e}}
\newcommand{\bx}{\mathbf{x}}
\newcommand{\T}{\mathbb{T}}
\newcommand{\f}{\varphi}
\newcommand{\vf}{\varphi}
\newcommand{\D}{\mathbb{D}}
\newcommand{\C}{\mathbb{C}}
\newcommand{\p}{\partial}
\newcommand{\nm}{\,\rule[-.6ex]{.13em}{2.3ex}\,}
\newcommand{\1}{\mathbf{1}}
\newcommand{\cX}{\mathcal{X}}
\newcommand{\bP}{\mathbb{P}}
\newcommand{\bK}{\mathbf{K}}
\newcommand{\fdot}{\,\cdot\,}
\newcommand{\ci}[1]{_{ {}_{\scriptstyle #1}}}
\begin{document}

\title
{A remark on the reproducing kernel thesis for Hankel operators}

\author{Sergei Treil}
\thanks{
This material is based on the work supported by the National Science Foundation under the grant  DMS-0800876. Any opinions, findings and conclusions or recommendations expressed in this material are those of the author and do not necessarily reflect the views of the National Science Foundation. }
\address{Department of Mathematics, Brown University, 151 Thayer 
Str./Box 1917,      
 Providence, RI  02912, USA }
\email{treil@math.brown.edu}
\urladdr{http://www.math.brown.edu/\~{}treil}

\subjclass[2000]{Primary 42B30, 42B20; Secondary 42B25}
\keywords{Hankel operator, reproducing kernel thesis, Bonsalls theorem, Uchiyama Lemma} 
\date{}

\begin{abstract} 
In this paper we give a simple proof of the so called reproducing kernel thesis for Hankel operators. 

\end{abstract}

\maketitle

\setcounter{tocdepth}{1}
\tableofcontents

\section*{Notation}

\begin{entry}
\item[$:=$] equal by definition;
\item[$\C$] the complex plane;
\item[$\D$] the unit disk, $\D:=\{z\in\C: |z|<1\}$;
\item[$\T$] the unit circle, $\T:=\p\D=\{z\in\C:|z|=1\}$;
\item[$\widehat f(n)$] Fourier coefficient of the function $f$, $\widehat f(n) := (2\pi)^{-1} \int_\T f(z) z^{-n} |dz|$
\item[$L^p=L^p(\T)$]  Lebesgue spaces with respect to the normalized Lebesgue measure $(2\pi)^{-1} |dz| $ on $\T$;
\item[$H^p$] Hardy spaces, $H^p:= \{ f\in L^p(\T): \widehat f(n)=0 \ \forall n<0\}$;
\item[$H^2_-$]  $H^2_- := L^2(\T)\ominus H^2 = \{ f\in L^2(\T): \widehat f(n)=0 \ \forall n \ge 0\}$;
\item[$H^p(E)$] vector-valued Hardy spaces with values in a separable Hilbert space $E$;
\item[$H^2_-(E)$] vector-valued $H^2_-$;
\item[$\bP_+$, $\bP_-$] orthogonal projections onto $H^2$ and $H^2_-$ respectively;  
\item[$\|\cdot\|, \nm\cdot \nm$]  norm;  when dealing with vector-valued functions we will use the symbol $\|\cdot\|$ (usually with a
subscript) for the norm in a functions space, while $\nm\cdot\nm$ is used for the
norm in the underlying vector  space.
 Thus for a vector-valued function
$f$ the symbol
$\|f\|_2$ denotes its
$L^2$-norm, but the symbol $\nm f\nm$ stands for the scalar-valued function whose
value at a point $z$ is the norm of the vector  $f(z)$.
 
\end{entry}

\section{Introduction and main results}

A Hankel operator is a bounded linear operator $\Gamma : H^2 \to H^2_-$ such that its matrix with respect to the standard bases $\{ z^n\}_{n\ge 0}$ and $\{ \overline z^{n+1} \}_{n\ge0}$ in $H^2$ and $H^2_-$ respectively, depends on the sum of indices, i.e.~has the form
\[
\{\gamma_{j+k+1}\}_{j, k=0}^\infty. 
\]
If one defines 
\[
\f_- := \Gamma \1 = \sum_{k=1}^\infty \gamma_k \overline z^{k}, \qquad z\in \T, 
\]
then the action of $\Gamma$ on polynomials $f$ is given by 
\begin{equation}
\label{hank1}
\Gamma g = \bP_- (\f_- f). 
\end{equation}
The function $\f_-$ is called the antianalytic symbol of the operator $\Gamma$. 

In this paper we will also be dealing with the vectorial Hankel operators $\Gamma: H^2 \to H^2_-(E)$, where $E$ is an auxilary (separable) Hilbert space. In this case the entries $\gamma_k$ are operators $\gamma_k:\C\to E$ and are naturally identified with vectors in $E$. The symbol $\f_-$ then is a vector-valued function in $H^2_-(E)$. 

Note that  in \eqref{hank1} we can replace $\f_-$ by $\f\in L^2(\T\to E)$ such that $\f - \f_-\in H^2(E)$  (so $\widehat \f(n) = \gamma_n $ for all $n<0$). Such functions $\f$ is called   \emph{a symbol} of the operator $\Gamma$. Unlike the antianalytic symbol $\f_-$, the symbol $\f$ is not unique. Note also that for any symbol $\f$ of the Hankel operator $\Gamma$ the estimate $\|\Gamma\|\le \|\f\|_\infty$ holds, and the famous Nehari Theorem states that one can find a symbol $\f$ such that $\|\Gamma\|=\|\f\|_\infty$

In this paper we deal with the so-called \emph{(pre)Hankel} operators (non-standard term), which are not assumed to be bounded, but only defined on polynomials (and have the Hankel matrix $\{\gamma_{j+k+1}\}_{j, k=0}^\infty$). In this case the anianalytic symbol $\f_-$ is also in $H^2_-$, and the action of $\Gamma$ on polynomials is still given by \eqref{hank1}. Using uniform approximation by polynomials, we can easily see that a (pre)Hankel operator $\Gamma$ can be defined on $H^2\cap C(\T)$ and  that its action on $H^2\cap C(\T)$ is still given by \eqref{hank1}.

Let us recall that the normalized reproducing kernel $k_\la$, $\la\in \D$ of $H^2$ is given by
\begin{align}
\label{eq.nrk}
k_\la(z) := \frac{(1-|\la|^2)^{1/2}}{1-\overline \la z},  
\end{align}
and that $\|k_\la\|_2 =1$.

The goal of this paper is to give an elementary proof of the following well known 
result. 

\begin{thm}[Reproducing kernel thesis for Hankel operators]
\label{t.rkt}
Let $\Gamma$ be a possibly vectorial (pre)Hankel operator such that 
\begin{align*}
\sup_{\la\in \D} \| \Gamma k_\lambda \|_2 \le A < \infty
\end{align*}
Then $\Gamma$ is bounded and $\|\Gamma\| \le 2\sqrt{e} A$. 
\end{thm}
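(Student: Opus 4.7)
The plan has three parts.

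\emph{Setup and key identity.} For $\lambda\in\D$, expand $k_\lambda(z)=(1-|\lambda|^2)^{1/2}\sum_{n\ge 0}\bar\lambda^n z^n$ and multiply by $\varphi_-(z)=\sum_{k\ge 1}\gamma_k\bar z^k$. Applying $\bP_+$ term by term (retaining only $n\ge k$) and resumming yields
\[
\bP_+(\varphi_- k_\lambda) \,=\, \Big(\sum_{k\ge 1}\gamma_k\bar\lambda^k\Big)\, k_\lambda \,=\, \varphi_-(\lambda)\, k_\lambda,
\]
where $\varphi_-(\lambda)\in E$ denotes the Poisson integral $\int_\T\varphi_- P_\lambda\,dm$ (the $E$-valued anti-analytic extension of $\varphi_-$). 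Consequently $\Gamma k_\lambda=(\varphi_- - \varphi_-(\lambda))\,k_\lambda$ as elements of $L^2(E)$, and the hypothesis converts to the vector-valued Garsia--BMO bound
\[
\sup_{\lambda\in\D}\int_\T\nm\varphi_-(z) - \varphi_-(\lambda)\nm_E^2\, P_\lambda(z)\,dm(z) \,\le\, A^2.
\]

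\emph{Main argument.} For any $f\in H^2\cap C(\T)$ and any $\psi\in H^\infty(E)$, we have $\psi f\in H^2(E)$, so $\bP_-(\psi f)=0$ and therefore
\[
\|\Gamma f\|_2 \,=\, \|\bP_-((\varphi_- - \psi)f)\|_2 \,\le\, \|(\varphi_- - \psi)f\|_2 \,\le\, \|\nm\varphi_- - \psi\nm\|_\infty\,\|f\|_2.
\]
Taking the infimum over $\psi$ gives $\|\Gamma\|\le\dist_{L^\infty(E)}(\varphi_-,H^\infty(E))$. The problem thus reduces to constructing, using only the Garsia bound of Step~1, some $\psi\in H^\infty(E)$ with $\|\nm\varphi_- - \psi\nm\|_\infty\le 2\sqrt e\,A$. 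A natural route is a dyadic stopping-time construction guided by the level sets of $\nm\varphi_-(z) - \varphi_-(\lambda)\nm$, producing an analytic $\psi$ that ``tracks'' $\varphi_-$ scale by scale.

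\emph{Main obstacle.} The delicate point is the explicit constant $2\sqrt e$; a generic BMOA-to-$L^\infty/H^\infty$ argument would give only ``$CA$''. I expect the factor $\sqrt e$ to emerge from a John--Nirenberg-type exponential integrability estimate derived from the Garsia condition, of the form $\sup_\lambda\int\exp(t\nm\varphi_- - \varphi_-(\lambda)\nm^2/A^2)\,P_\lambda\,dm\le 2$ for an optimal $t>0$, and the factor $2$ from a reflection or doubling step in the stopping-time argument. The vector-valued framework causes no essential new difficulty: the identity of Step~1 and the reduction above are all inner-product-compatible in $E$, so the scalar proof should extend \emph{verbatim} once the sharp exponential bound is in hand.
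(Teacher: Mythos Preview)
Your Step~1 is correct and coincides with the paper's Lemmas~2.1--2.2. The gap is in Steps~2--3: you reduce to finding $\psi\in H^\infty(E)$ with $\|\,\nm\varphi_- -\psi\nm\,\|_\infty\le 2\sqrt{e}\,A$, but you never construct such a $\psi$; ``dyadic stopping time'' and ``I expect $\sqrt e$ to emerge from John--Nirenberg'' are hopes, not arguments. Even granting that a stopping-time construction yields \emph{some} bound (it does --- this is essentially the equivalence of the Garsia norm with BMO combined with Fefferman duality, exactly the machinery the paper sets out to bypass), there is no mechanism in that argument that singles out the constant $2\sqrt e$. In the paper $\sqrt e$ arises from a completely different source: a Uchiyama-type inequality (Lemma~2.4) in which one applies Green's formula to $e^{u}\nm f\nm^2$ for a subharmonic $u$ with $0\le u\le 1$, and the trivial bound $e^{u}\le e$ delivers the factor $e$. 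The $2$ comes not from any ``reflection or doubling'' but from the product rule: $\Delta(\varphi_- f,\bar g)\ci E$ splits into two symmetric terms.

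The paper's route never passes through $\dist(\varphi_-,H^\infty(E))$ at all. One writes $(\Gamma f,\bar g)$ as a boundary integral, applies Green's formula to push it into $\D$, expands the Laplacian via $\Delta=4\partial\bar\partial$, and bounds each of the two resulting terms by Cauchy--Schwarz. One factor in each is a Littlewood--Paley integral bounded by $\|g\|_2^2$ (resp.\ $\|f\|_2^2$); the other is $\frac{2}{\pi}\int_\D\nm\bar\partial\varphi_-\nm^2\,|f|^2\ln\frac{1}{|z|}\,dA$, which is $\le e\,\|f\|_2^2$ by Lemma~2.4 applied to $u(z)=1+\nm\varphi_-(z)\nm^2-\nm\varphi_-\nm^2(z)$. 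This $u$ satisfies $0\le u\le 1$ precisely by your Step~1 and has $\Delta u=4\nm\bar\partial\varphi_-\nm^2$. Each term is thus $\le\sqrt e\,\|f\|_2\|g\|_2$, giving $2\sqrt e$ in total.
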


This theorem for the  scalar-valued case (with some constant $C$ instead of $2\sqrt{e}$) was  published in \cite{Bonsall1984}, and is widely used in theory of Hankel operators. 

The proof presented in this note is quite elementary and uses only Green's formula: the standard proof uses Nehari Theorem, $H^1$-BMO duality and the fact that the so-called  Garsia norm is an equivalent norm in BMO. 

While Nehari Theorem is a basic fact in the theory of Hankel operators, and  the other facts   are standard and well-known results in Harmonic analysis, it is still interesting to know that non of these results is needed for the prof of the Reproducing Kernel Thesis for Hankel operators (Theorem \ref{t.rkt}).

Finally, let us emphasize, that while the target space of our operator is a vector-valued space $H^2_-(E)$, the domain is usual scalar-valued $H^2$ (more precisely, initially a dense subset of $H^2$). It is known that the reproducing kernel thesis fails for operator-valued Hankel operators: while it is true for Hankel operators  acting from $H^2(\C^d)$ to $H^2_-(E)$, the constant grows logaritmically in $d$, see for example \cite{NazTreVolPis-2002}.

\section{Proof of the main result}

Let us fix some notation. For $f\in L^1(\T)$ and $z\in \D$ let $f(z) $ denote the Poison (harmonic) extension of $f$ at the point $z$. Thus, for $\f\in L^2(\T\to E)$ the symbol $\nm \f(z)\nm^2$ is the square of tne norm (in $E$) of the harmonic extension of $\f$ at the point $z\in\D$, and $\nm\f\nm^2(z)$ is the harmonic extension of $\nm\f\nm^2\Bigm|_{\T}$ at $z$.

\subsection{Hankel operators and reproducing kernels}

Let us recall that the reproducing kernel $K_\la$, $\la\in \D$ of the Hardy space $H^2$ is given by 
\[
K_\la(z) = \frac{1}{1-\overline\la z}. 
\]
It is called the \emph{reproducing kernel} because for all $f\in H^2$
\begin{align}
\label{rkprop}
(f, K_\la) = f(\la). 
\end{align}
Note, that because for each $\la\in\D$ the function $K_\la$ is bounded, the simple approximation argument implies that the reproducing kernel identity \eqref{rkprop} holds for all $f\in H^1$.

Using the reproducing kernel property \eqref{rkprop} with $f = K_\la$ on gets
\[
\|K_\la\|_2^2 = (K_\la, K_\la) = (1-|\la|^2)^{-1}, 
\]
so the normalized reproducing kernel $k_\la := \|K_\la\|_2^{-1} K_\la$ is given by \eqref{eq.nrk}.

The following lemma is well known, it can be found, for example (in implicit form)  in  \cite{Bonsall1984}. We present it here only for the convenience of the reader. 

\begin{lm}
\label{l.GK_la}
Let $\Gamma$ be a (pre)Hankel operator, and let $\f\in H^2_-(E)$ be its antianlytic symbol $\f=\sum_{k=1}^\infty \gamma_k \overline z^{k}$ (to simplify the notation we skip subscript ``$-$'' and use $\f$ instead of $\f_-$). Then for all $\lambda \in \D$
\begin{align*}
\| \Gamma k_\la \|_2^2 = \nm \f \nm^2(\la) - \nm\f(\la)\nm^2. 
\end{align*}
\end{lm}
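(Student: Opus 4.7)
The plan is to exploit the two basic identities already highlighted in the paper: the formula $\Gamma g=\mathbf{P}_-(\varphi g)$ for the action of $\Gamma$ on $H^2\cap C(\mathbb{T})$, and the fact that the modulus squared of the normalized reproducing kernel is precisely the Poisson kernel, $|k_\lambda(z)|^2=(1-|\lambda|^2)/|1-\bar\lambda z|^2=P_\lambda(z)$.

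First, since $k_\lambda\in H^2\cap C(\mathbb{T})$, I would write $\Gamma k_\lambda=\mathbf{P}_-(\varphi k_\lambda)$ and split
\begin{align*}
\|\Gamma k_\lambda\|_2^2 \;=\; \|\varphi k_\lambda\|_2^2 \;-\; \|\mathbf{P}_+(\varphi k_\lambda)\|_2^2 .
\end{align*}
The first term is immediate: expanding the square and using the Poisson kernel identity above gives
\begin{align*}
\|\varphi k_\lambda\|_2^2 \;=\; \int_\mathbb{T}\nm\varphi(z)\nm^2 P_\lambda(z)\,\frac{|dz|}{2\pi}\;=\;\nm\varphi\nm^2(\lambda),
\end{align*}
which is one of the two terms we want.

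For the second term I would compute $\mathbf{P}_+(\varphi k_\lambda)$ explicitly in Fourier series. Writing $\varphi(z)=\sum_{k\ge1}\gamma_k\bar z^k$ and $k_\lambda(z)=(1-|\lambda|^2)^{1/2}\sum_{n\ge 0}\bar\lambda^n z^n$ on $\mathbb{T}$, the analytic part of the product, corresponding to indices $n\ge k$, telescopes (with $m=n-k$) to
\begin{align*}
\mathbf{P}_+(\varphi k_\lambda)(z) \;=\; (1-|\lambda|^2)^{1/2}\Bigl(\sum_{k\ge 1}\gamma_k\bar\lambda^k\Bigr)\sum_{m\ge0}\bar\lambda^m z^m \;=\; \varphi(\lambda)\,k_\lambda(z),
\end{align*}
where $\varphi(\lambda)=\sum_{k\ge 1}\gamma_k\bar\lambda^k$ is the harmonic (Poisson) extension of $\varphi$ at $\lambda\in\D$; note this is a vector in $E$ independent of $z$. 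Since $\|k_\lambda\|_2=1$, taking norms gives $\|\mathbf{P}_+(\varphi k_\lambda)\|_2^2=\nm\varphi(\lambda)\nm^2$. Substituting into the splitting yields the claimed identity.

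There is essentially no obstacle here, but the one step that requires a tiny bit of care is the computation of $\mathbf{P}_+(\varphi k_\lambda)$: one should observe that even in the vector-valued case, the sum $\sum_{k\ge 1}\gamma_k\bar\lambda^k$ converges in $E$ (since $\varphi\in H^2_-(E)$ and $\lambda\in\D$), so the rearrangement of the double series is justified. Alternatively, one can avoid Fourier series entirely and argue via duality: for any $e\in E$, the scalar function $z\mapsto(\mathbf{P}_+(\varphi k_\lambda)(z),e)_E$ lies in $H^2$ and pairs with $K_\mu$ to give $(\varphi(\mu)k_\lambda(\mu),e)_E$ by the reproducing kernel identity, after which the explicit form follows.
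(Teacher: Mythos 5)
Your proof is correct, and it follows the same overall skeleton as the paper's: the orthogonal splitting $\|\Gamma k_\la\|_2^2 = \|\f k_\la\|_2^2 - \|\bP_+(\f k_\la)\|_2^2$ together with the observation that $|k_\la|^2$ is the Poisson kernel, which gives the first term $\nm\f\nm^2(\la)$. The only point of divergence is how the identity $\bP_+(\f k_\la)=\f(\la)k_\la$ is established. The paper isolates this as a separate statement (Lemma \ref{l.toepl-eigenvalue}) and proves it in the scalar case by writing $\bP_+(\f K_\la)=cK_\la+f_0$ with $f_0(\la)=0$, using the reproducing kernel property \eqref{rkprop} (extended to $H^1$) to show $\|f_0\|_2^2=\overline{\f(\la)}f_0(\la)=0$ and to identify $c$, and then passing to the vector-valued case coordinatewise. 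You instead compute $\bP_+(\f k_\la)$ directly by multiplying the Fourier series of $\f$ and $k_\la$ and collecting the nonnegative frequencies; the resulting double sum rearranges legitimately since $\sum_k\nm\gamma_k\nm^2<\infty$ and $|\la|<1$, as you note, and the zero-frequency term correctly lands in $\bP_+$ given the paper's convention $H^2_-=\{f:\widehat f(n)=0\ \forall n\ge 0\}$. Both arguments are elementary; yours is more computational but self-contained, avoiding the need to verify $\overline\f f_0\in H^1$ and the $H^1$ extension of the reproducing kernel identity, while your alternative ``duality'' remark at the end is essentially the paper's route.
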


To proof the lemma we will need the following well-known fact. 

\begin{lm}
\label{l.toepl-eigenvalue}
Let $\f\in H^2_-(E)$. Then for all $\la\in \D$ 
\[
\bP_+ (\f k_\la) =  k_\la  {\f(\la)}  .
\]
\end{lm}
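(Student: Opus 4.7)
The plan is to prove the identity by a direct Fourier series computation. I would write out the expansion $\f(z) = \sum_{k=1}^\infty \gamma_k \bar z^k$ (convergent in $L^2(\T,E)$) and the power series $k_\la(z) = (1-|\la|^2)^{1/2} \sum_{n=0}^\infty \bar\la^n z^n$. On the circle $\bar z^k z^n = z^{n-k}$, so the pointwise product contains Fourier modes of index $n-k$, and $\bP_+$ retains precisely those terms with $n\ge k$. Reindexing by $m = n-k$ factors the surviving double sum as
$$\Bigl(\sum_{k=1}^\infty \gamma_k \bar\la^k\Bigr) \cdot (1-|\la|^2)^{1/2} \sum_{m=0}^\infty \bar\la^m z^m = \f(\la)\, k_\la(z),$$
since the scalar prefactor is precisely the harmonic (Poisson) extension $\f(\la) \in E$.

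To make this rigorous without any slick summation issues, I would first establish the identity for antianalytic polynomials $\f = \sum_{k=1}^N \gamma_k \bar z^k$, where the double sum is finite and all manipulations are manifestly legitimate, and then extend by density. Both sides of the claimed identity depend continuously on $\f \in H^2_-(E)$: multiplication by the bounded function $k_\la$ is bounded on $L^2(\T,E)$ and $\bP_+$ is an orthogonal projection, giving continuity of the left-hand side; while harmonic extension at the fixed point $\la \in \D$ is continuous from $H^2_-(E)$ to $E$ (it is evaluation against the reproducing kernel of $H^2_-$, hence bounded with norm $(1-|\la|^2)^{-1/2}$), which, combined with the boundedness of multiplication by $k_\la$, gives continuity of the right-hand side. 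Density of antianalytic polynomials in $H^2_-(E)$ then completes the argument.

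I do not foresee any real obstacle in this proof; it is essentially a single reindexing of a geometric-series product, followed by a routine density/continuity step. The only conceptual point worth emphasizing is the perfect match between the antianalytic modes $\bar z^k$ of $\f$ and the analytic modes $z^k$ of $k_\la$: this matching is precisely what produces the value $\f(\la) = \sum_k \gamma_k \bar\la^k$ as the scalar prefactor, and it is also the reason why no cross-terms survive beyond the factor $k_\la(z)$.
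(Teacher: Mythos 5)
Your proof is correct, but it takes a different route from the paper's. You compute $\bP_+(\f k_\la)$ directly on Fourier modes: expanding $\f=\sum_{k\ge1}\gamma_k\bar z^k$ against the geometric series for $k_\la$, applying $\bP_+$, and reindexing to factor out $\sum_k\gamma_k\bar\la^k=\f(\la)$; the finite-rank-first-then-density step is the right way to make the reindexing airtight, and your continuity claims for both sides are sound (the exact norm of the evaluation functional is $|\la|(1-|\la|^2)^{-1/2}$ rather than $(1-|\la|^2)^{-1/2}$, but only boundedness matters). The paper instead argues structurally: it writes $f=\bP_+(\f K_\la)$ in the orthogonal decomposition $f=cK_\la+f_0$ with $f_0(\la)=0$, kills $f_0$ by computing $\|f_0\|_2^2=(\overline\f f_0, K_\la)=\overline{\f(\la)}f_0(\la)=0$ via the reproducing-kernel identity extended to $H^1$ (needed because $\overline\f f_0$ is only in $H^1$), and then identifies the coefficient $c$ by pairing with $K_\la$; the vector case is reduced to scalar coordinates at the end. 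Your computation buys a self-contained argument that needs neither the $H^1$ extension of the reproducing property nor the coordinatewise reduction (your $\gamma_k$ can live in $E$ throughout), at the cost of an explicit series manipulation; the paper's argument is coordinate-free in spirit and makes transparent \emph{why} the projection must be a scalar multiple of $K_\la$, namely that its component orthogonal to $K_\la$ vanishes at $\la$ and hence vanishes identically under the pairing.
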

\begin{proof}
Let us first proof this lemma for scalar-valued $\f\in H^2_-$. 

Let $f:= \bP_+ (\f K_\la)$, where $K_\la$ is the reproducing kernel for $H^2$. Any $f\in H^2$ can be decomposed as 
\begin{align*}
f= c K_\la + f_0, 
\end{align*}
where $f_0(\la) = 0$, and $c= (1-|\la|^2) f(\la)$; note that $K_\la\perp f_0$.  

Let us first show that $f_0=0$ for $f=\bP_+ (\f K_\la)$. Notice that $\overline \f f_0\in H^1$ because  $\overline \f, f_0\in H^2$, so we can get using the reproducing kernel property \eqref{rkprop}
\begin{align*}
\|f_0\|_2^2 = (f_0, \bP_+ (\f K_\la)) = (f_0, \f K_\la) = (\overline \f f_0, K_\la) = (\overline \f f_0)(\la) = \overline{ \f(\la)} f_0(\la) = 0. 
\end{align*}

On the other hand, 
\begin{align*}
(K_\la , f) = (K_\la , \bP_+ (\f K_\la)) = (K_\la , \f K_\la) = (\overline \f K_\la, K_\la) & = \overline{\f(\la)} K_\la(\la)\\
&  = 
\overline{\f(\la)} (1-|\la|^2)^{-1}. 
\end{align*}
Therefore
\begin{align*}
(\bP_+(\f K_\la), K_\la) = \f(\la) (1-|\la|^2)^{-1} = \f(\la) \|K_\la\|_2^2, 
\end{align*}
so 
\[
\bP_+(\f K_\la) = \f(\la) K_\la. 
\]
Multiplying this identity by $(1-|\la|^2)^{1/2}$ we get the conclusion of the lemma for scalar-valued $\f$. 

The general vector-valued case can be easily obtained from the scalar-valued case by fixing an orthonormal basis $\{\be_k\}_k$ and applying the scalar-valued result to coordinate functions $\f_k$, $\f_k (z) = (\f(z), \be_k)\ci E$. 
\end{proof}

 \begin{proof}[Proof of Lemma \ref{l.GK_la}]
 Function $\f k_\la$ can be decomposed as the orthogonal sum
\begin{align*}
\f k_\la = \bP_+ (\f k_\la) + \bP_- (\f k_\la), 
\end{align*}
so 
\begin{align*}
\|\Gamma k_\la\|_2^2 = \| \bP_- (\f k_\la) \|_2^2  = \| \f k_\la \|_2^2 - \| \bP_+ (\f k_\la) \|_2^2. 
\end{align*}

Noticing that
\[
|k_\la(z)|^2 = \frac{1-|\la|^2}{|1-\overline \la z|^2}
\]
we can write 
\begin{align*}
\| \f k_\la \|_2^2 = \frac1{2\pi} \int_\T \nm\f(z) \nm^2 |k_\la(z)|^2 |dz| = \nm\f\nm^2(\la). 
\end{align*}
According to Lemma \ref{l.toepl-eigenvalue} $\bP_+ (\f k_\la) = \f(\la) k_\la$, so $\| \bP_+ (\f k_\la) \|_2^2 = \nm\f(\la)\nm^2$.
\end{proof}

\subsection{Green's formula and Littlewood--Paley identities}
We need several well-known facts. 

The first one is the standard Green's formula for the unit disc.
\begin{lm}
\label{l.Green}
Let $f\in C^2(\D) \cap C(\overline\D)$. Then 
\[
\frac1{2\pi}  \int_\T U(z) |dz| - U(0) = \frac1{2\pi} \int_\D\Delta U(z) \ln\frac1{|z|} dA(z) 
\]
\end{lm}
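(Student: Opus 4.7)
The plan is to recognize $\tfrac{1}{2\pi}\ln\tfrac{1}{|z|}$ as (a constant multiple of) the Green's function of $\Delta$ on $\D$ with pole at the origin and Dirichlet boundary condition on $\T$, and then to apply Green's second identity on a disk punctured at $0$. Set $g(z):=\tfrac{1}{2\pi}\ln\tfrac{1}{|z|}$. The properties I will use are that $g$ is harmonic on $\D\setminus\{0\}$, $g\bigm|_\T\equiv 0$, the outward normal derivative of $g$ on $\T$ equals $-\tfrac{1}{2\pi}$, and on $\{|z|=\e\}$ the derivative in the outward normal direction of the annulus $A_\e:=\{\e<|z|<1\}$ (which points toward the origin) equals $\tfrac{1}{2\pi\e}$.

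First I would write Green's second identity for the pair $U,g$ on $A_\e$. Because $\Delta g\equiv 0$ on $A_\e$, the bulk integral reduces to $-\int_{A_\e} g\,\Delta U\,dA$. The boundary integral splits over the two circles: on $\T$, the $g\,\p_n U$ piece vanishes since $g=0$ there, while the $U\,\p_n g$ piece contributes $-\tfrac{1}{2\pi}\int_\T U\,|dz|$. On the inner circle $\{|z|=\e\}$, the $U\,\p_n g$ piece becomes $\tfrac{1}{2\pi\e}\int_{|z|=\e} U\,ds$, i.e.\ the mean of $U$ over a small circle around $0$, which tends to $U(0)$ as $\e\to 0^+$ by continuity of $U$ at the origin.

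The remaining inner-boundary term $\int_{|z|=\e} g\,\p_n U\,ds$ is $O(\e\,|\ln\e|)$, since $|g|=\tfrac{|\ln\e|}{2\pi}$ on that circle, its length is $2\pi\e$, and $\p_n U$ is bounded near $0$ by the $C^2$ assumption on $U$. Taking $\e\to 0^+$ and rearranging then yields the stated identity. The only mildly delicate point is justifying $\int_{A_\e} g\,\Delta U\,dA\to\int_\D g\,\Delta U\,dA$, which requires integrability of $\Delta U\cdot\ln\tfrac{1}{|z|}$ on $\D$; this is either an implicit hypothesis of the lemma or follows from extra regularity of $U$ up to $\T$, and once granted the rest of the argument is entirely routine.
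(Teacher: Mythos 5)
Your argument is correct and is the standard one: pair $U$ with the Green's function $g(z)=\tfrac1{2\pi}\ln\tfrac1{|z|}$ via Green's second identity on the annulus $\{\e<|z|<1\}$ and let $\e\to0^+$; the paper offers no proof at all, citing this as the classical Green's formula for the disc, so there is nothing to contrast it with. The one caveat you rightly flag at the end is genuine --- under the stated hypothesis $U\in C^2(\D)\cap C(\overline\D)$ neither Green's identity up to $\T$ nor the integrability of $\Delta U\,\ln\tfrac1{|z|}$ near the boundary is automatic --- and it is exactly the point the paper itself acknowledges where the lemma is used, by first applying it to the dilates $U(rz)$, $r<1$, and then letting $r\to1$.
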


Applying this lemma to $U(z) = \nm f(z)\nm^2$, $f\in H^2(E)$ and noticing that $\Delta U = 4\p\pbar U = 4 \nm f'\nm^2$ we get the following Littlewood--Paley identity. 

\begin{lm}
\label{l.L-P-1}
Let $f\in H^2(E)$. Then 
\[
\|f\|_2^2 = \frac{2}{\pi} \int_\D \nm f'(z)\nm^2 \ln\frac1{|z|} dA(z) + \nm f(0)\nm^2 .
\]
\end{lm}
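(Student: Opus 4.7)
The plan is to apply Lemma \ref{l.Green} to $U(z) := \nm f(z)\nm^2$, combined with the identity $\Delta U = 4\nm f'(z)\nm^2$ for $E$-valued holomorphic $f$. Substituting into Green's formula produces exactly the claimed equation, since $\tfrac{1}{2\pi}\cdot 4 = \tfrac{2}{\pi}$. So the whole argument reduces to justifying two things: the pointwise Laplacian formula for $\nm f\nm^2$, and the applicability of Green's formula to an object that a priori lives only in $H^2(E)$.

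For the Laplacian formula, I would fix an orthonormal basis $\{\be_k\}$ in $E$ and write $f_k(z) := (f(z), \be_k)\ci E \in H^2$, so that $\nm f\nm^2 = \sum_k |f_k|^2$. For each scalar holomorphic $f_k$, the chain of identities $\Delta |f_k|^2 = 4\p\pbar(f_k \overline{f_k}) = 4 |f_k'|^2$ is immediate from $\pbar f_k = 0$ and $\p \overline{f_k} = 0$. Summing over $k$ (termwise exchange being legitimate because $f$ and $f'$ converge locally uniformly on compact subsets of $\D$) gives $\Delta U = 4\nm f'\nm^2$ pointwise on $\D$.

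The main obstacle is the regularity hypothesis: Lemma \ref{l.Green} requires $U \in C^2(\D) \cap C(\overline\D)$, while a generic $f \in H^2(E)$ need not extend continuously to $\T$. I would resolve this by applying the argument to the dilates $f_r(z) := f(rz)$, $0 < r < 1$, each of which is holomorphic in a neighborhood of $\overline\D$, obtaining
\[
\|f_r\|_2^2 - \nm f(0)\nm^2 = \frac{2}{\pi} \int_\D \nm f_r'(z)\nm^2 \ln\frac{1}{|z|}\, dA(z),
\]
and then letting $r \nearrow 1$. The left side tends to $\|f\|_2^2 - \nm f(0)\nm^2$ by the standard fact $\|f_r\|_2 \to \|f\|_2$ for $f \in H^2$. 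For the right side, the change of variable $w = rz$ rewrites the integral as $\int_{\{|w| < r\}} \nm f'(w)\nm^2 \ln(r/|w|)\, dA(w)$; as $r \nearrow 1$ this is an increasing family of non-negative measurable functions converging pointwise to $\nm f'(w)\nm^2 \ln(1/|w|)$, so monotone convergence delivers the desired limit and hence the identity. Alternatively, one can avoid the regularization entirely by expanding $f = \sum_{n \ge 0} a_n z^n$ and computing both sides via the elementary integral $\int_0^1 \rho^{2n-1} \ln(1/\rho)\, d\rho = 1/(4n^2)$, but the Green's-formula route is closer in spirit to the rest of the paper.
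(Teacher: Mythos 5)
Your proposal is correct and follows exactly the paper's route: apply Green's formula (Lemma \ref{l.Green}) to $U=\nm f\nm^2$ with $\Delta U = 4\p\pbar U = 4\nm f'\nm^2$, first for the dilates $f(rz)$ and then let $r\to 1$. The paper states this in two sentences and leaves the limiting argument implicit; your monotone-convergence justification fills in that detail correctly.
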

 Of course we have to first apply Lemma \ref{l.Green} to $\nm f(rz)\nm^2$, $r<1$ and then take limit as $r\to 1$. 

The following lemma is also well-known, see for example Lemma 6 in Appendix 3 of the monograph \cite{Nik-shift}
\begin{lm}
\label{l.Uchi}
Let $u$ be a $C^2$ subharmonic function ($\Delta u\ge 0$) in the unit disc $\D$, and let $0\le u(z) \le 1$ for all $z\in \D$. Then for all $f\in H^2(E)$
\[
\frac1{2\pi}\int_\D \Delta u(z)  \nm f(z)\nm^2 \ln\frac1{|z|} dA(z) \le e \|f\|_2^2
\]
\end{lm}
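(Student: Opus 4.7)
The plan is to apply Green's formula (Lemma~\ref{l.Green}) not to $\nm f\nm^2$ itself but to the weighted auxiliary function $U(z):=e^{u(z)}\nm f(z)\nm^2$, working first with $f(rz)$ for $r<1$ and then letting $r\to 1$ as in the proof of Lemma~\ref{l.L-P-1}. The motivation is twofold: on the boundary $\T$ the factor $e^u$ is at most $e$, which will produce the constant on the right-hand side, while in the interior I want $\Delta U$ to dominate the density $\Delta u\cdot \nm f\nm^2$ appearing on the left.

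To establish this dominance I would expand $\Delta U$ via the Leibniz rule using $\Delta\nm f\nm^2 = 4\nm f'\nm^2$ (the identity used in Lemma~\ref{l.L-P-1}) and $\Delta e^u = e^u(\Delta u + |\nabla u|^2)$, which isolates the desired positive main term $e^u\Delta u\cdot\nm f\nm^2$ and leaves the trio
\[
e^u|\nabla u|^2\cdot\nm f\nm^2 + 4e^u\nm f'\nm^2 + 2\nabla e^u\cdot\nabla\nm f\nm^2 .
\]
The hard part will be the cross term $2\nabla e^u\cdot\nabla\nm f\nm^2$, which is indefinite. Using $\nabla g\cdot\nabla h = 4\re\bigl((\p g)(\pbar h)\bigr)$ for real $g,h$, the identity $\pbar\nm f\nm^2 = \La f,f'\Ra\ci E$, and Cauchy--Schwarz in $E$ together with $|\nabla u|=2|\p u|$, I expect the pointwise lower bound $2\nabla e^u\cdot\nabla\nm f\nm^2 \ge -4e^u|\nabla u|\cdot\nm f\nm\cdot\nm f'\nm$. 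The saving is that the full trio then collapses into a perfect square:
\[
e^u|\nabla u|^2\nm f\nm^2 + 4e^u\nm f'\nm^2 - 4e^u|\nabla u|\nm f\nm\nm f'\nm = e^u\bigl(|\nabla u|\nm f\nm - 2\nm f'\nm\bigr)^2 \ge 0.
\]
Combined with $e^u\ge 1$ and subharmonicity $\Delta u\ge 0$, this yields the key pointwise inequality $\Delta U\ge \Delta u\cdot\nm f\nm^2$.

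To conclude, one plugs this into Green's formula. Since $\ln(1/|z|)\ge 0$ on $\D$ we get
\[
\frac1{2\pi}\int_\D\Delta u\cdot\nm f\nm^2\ln\frac1{|z|}\,dA(z) \le \frac1{2\pi}\int_\D\Delta U\cdot \ln\frac1{|z|}\,dA(z) = \frac1{2\pi}\int_\T e^u\nm f\nm^2 |dz| - e^{u(0)}\nm f(0)\nm^2,
\]
and discarding the non-positive boundary correction together with the estimate $e^u\le e$ on $\T$ bounds the right-hand side by $e\|f\|_2^2$, which is the asserted inequality with the sharp constant $e$.
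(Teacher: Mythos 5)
Your proposal is correct and follows essentially the same route as the paper: both apply Green's formula to the auxiliary function $e^{u}\nm f\nm^2$, show that the remaining terms dominate $(\Delta u)\nm f\nm^2$ by completing a square (using $e^u\ge 1$ and $\Delta u\ge 0$), and finish with $e^u\le e$ on $\T$ after discarding the term at the origin. The only cosmetic difference is that the paper records the exact identity $\Delta\bigl(e^{u}\nm f\nm^2\bigr)=e^u(\Delta u)\nm f\nm^2+4e^u\nm(\p u)f+\p f\nm^2$ in complex (Wirtinger) notation, whereas you first bound the cross term by Cauchy--Schwarz and then complete a square of real scalars.
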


\begin{proof}
Replacing $u$ and $f$ by $u(rz)$ anf $f(rz)$, $r<1$ and then taking limit as $r\to 1$ we can always assume without loss of generality that $u$ and $f$ are continuous up to the boundary of $\D$, so the Green's formula (Lemma \ref{l.Green}) applies to 
 $U(z) = e^{u(z)} \nm f(z)\nm^2$. Direct computation using the fact that $\Delta =4\p\pbar$  shows that 
\[
\Delta \bigl( e^{u (z)} \nm f(z)\nm^2\bigr) = e^u (\Delta u) \nm f \nm^2 + 4e^u \nm(\p u) f + \p f \nm^2 \ge  (\Delta u) \nm f\nm^2. 
\]
Then denoting $d\mu(z) =(2\pi)^{-1}\ln|z|^{-1} dA(z)$, we can write using  Green's formula (Lemma \ref{l.Green}):  
\begin{align*}
\frac1{2\pi}\int_\D \Delta u \, \nm f \nm^2\, d\mu & \le \int_\D \Delta \bigl( e^\vf  \nm f\nm^2\bigr)\, d\mu 
\\ &= 
\frac{1}{2\pi}\int_\T e^\vf  \nm f\nm^2 \, |dz| - e^{\vf(0)} \nm f(0)\nm^2 \le e \frac{1}{2\pi}\int_\T   \nm f\nm^2 \, |dz| =e \, \|f\|_2^2.
\end{align*}
\end{proof}

\subsection{Proof of Theorem \ref{t.rkt}} By homogeneity it is sufficient to prove theorem only for $A=1$, so let us assume that 
\[
\sup_{\la\in\D}\|\Gamma k_\la \|_2\le 1. 
\]

Let us introduce some notation. Fix an orthonormal basis $\{\be_k\}_k$ in $E$ and for a vector $\bx =\sum_k x_k\be_k \in E$ (of course $x_k = (\bx, e_k)\ci E$) let $\overline \bx$ be the ``complex conjugate'' vector, $\overline\bx := \sum_k \overline x_k\be_k$. So for the function $h$ with values in $E$ the symbol $\overline h$ denotes the function obtained by taking complex conjugates of the coordinate functions of $h$ (the orthonormal basis $\{e_k\}_k$ is assumed to be fixed). 

Let $\f$ be the antianalytic symbol of the Hankel operator $\Gamma$, so $\Gamma=\Gamma_\f$. Recall that for $z\in\D$ we use $\f(z)$ to denote the harmonic extension of $\f$ to the unit disc, so $\overline \f\in H^2(E)$. 

It is sufficient to estimate the operators $\Gamma_{\f_r}$, $\f_r(z):= \f(rz)$, $r\in(0,1)$, so without loss of generality we can assume that $\overline \f$ is analytic in some bigger than $\D$ disc. 

We want to estimate 
\[
(\Gamma f , \overline g) = \frac{1}{2\pi} \int_\T (\f f ,\overline g )\ci E |dz|, \qquad f\in H^2, \ \overline g \in H^2_-(E) \ (\text{equiv. } g\in z H^2(E)). 
\]
It is sufficient to check the boundedness on a dense set, so we can assume that $f$ and $g$ are polynomials, so we can apply the Green's formula. Since $f, g $ and $\overline \f$ are analytic in $\D$ and $\Delta = 4\p\pbar$, we get
$\pbar(\f f, g)\ci E = (f (\pbar\f) , g)\ci E$ and
\begin{align*}
\Delta (\f f, g)\ci E  & = 4 \left( (f(\pbar \f),\pbar \overline g)\ci E + (f'(\pbar \f), \overline g)\ci E \right) 
\\
& = 4 \left( (f(\pbar \f), \overline{ g'})\ci E + (f'(\pbar \f), \overline g)\ci E \right) . 
\end{align*}
Therefore, using the Green's formula (Lemma \ref{l.Green}) and the fact that the function $(\f f, \overline g)\ci E$ vanishes at the origin, we get 
\begin{align*}
(\Gamma f ,\overline g) = \frac{1}{2\pi} \int_\T (\f f, \overline g)\ci E |dz| =\frac{2}{\pi} \int_\D  \left( (f (\pbar \f),\overline{g'})\ci E + (f' (\pbar \f),\overline g)\ci E \right) \ln\frac1{|z|} dA(z) .
\end{align*}
We can estimate by Cauchy--Schwartz
\begin{align*}
&\left| \frac{2}{\pi} \int_\D  (f (\pbar \f), \overline{g'})\ci E  \ln\frac1{|z|} dA(z)  \right| 
\\
& \qquad \qquad \quad
\le \left(\frac{2}{\pi}\int_\D \nm(\pbar \f)  \nm^2 |f|^2 \ln\frac1{|z|} dA(z)\right)^{1/2}  \left(\frac{2}{\pi}\int_\D \nm g' \nm^2 \ln\frac1{|z|} dA(z)\right)^{1/2} .
\end{align*}
By Lemma \ref{l.L-P-1} 
\begin{equation}
\label{int1}
\frac{2}{\pi}\int_\D \nm g' \nm^2 \ln\frac1{|z|} dA(z) \le \|g\|_2^2. 
\end{equation}
To estimate the first integral, let us define $u(z) = 1 +   \nm \f(z)\nm^2 -\nm \f\nm^2(z)$, and notice that $\Delta u = 4 \nm \pbar \f\nm^2$. It follows from the assumption $\sup_{\la\in\D}\|\Gamma k_\la\|_2\le 1$ and Lemma \ref{l.GK_la} that 
\[
0\le u(z) \le 1\qquad \forall z\in \D,  
\]
so by Lemma \ref{l.Uchi}
\begin{equation}
\label{int2}
\frac{2}{\pi}\int_\D \nm (\pbar \f)\nm^2 | f |^2 \ln\frac1{|z|} dA(z) \le e \|f\|_2^2. 
\end{equation}
Gathering together etimates \eqref{int1} and \eqref{int2} we get
\[
\left| \frac{2}{\pi} \int_\D \left( f(\pbar \f) , \overline{g'} \right)\ci E  \ln\frac1{|z|} dA(z)\right| \le \sqrt{e} \|f\|_2\|g\|_2.
\]

Similarly, 
\begin{align*}
&\left| \frac{2}{\pi} \int_\D  (f' (\pbar \f), \overline{g})\ci E  \ln\frac1{|z|} dA(z)  \right| 
\\
& \qquad \qquad \quad
\le \left(\frac{2}{\pi}\int_\D \nm(\pbar \f)  \nm^2 \nm g\nm^2 \ln\frac1{|z|} dA(z)\right)^{1/2}  \left(\frac{2}{\pi}\int_\D | f' |^2 \ln\frac1{|z|} dA(z)\right)^{1/2},
\end{align*}
so interchanging $f$ and $g$ in the above reasoning  we get the estimate 
\[
\left| \frac{2}{\pi} \int_\D \left(  f' (\pbar \f), g \right)\ci E \ln\frac1{|z|} dA(z)\right| \le \sqrt{e} \|f\|_2\|g\|_2, 
\]
so $|(\Gamma f, g )| \le 2\sqrt{e} \|f\|_2\|g\|_2$. \hfill\qed

\section{Concluding remarks}

The main idea of using only Green's formula (and Lemma \ref{l.Uchi}) 
goes back to \cite{Pet-Tre-Vick2007}, where  the reproducing kernel thesis for the Carleson  embeding theorem for the disc and for the unit ball in $\C^n$ was proved using similar technique; for the disc the estimate $\sqrt{2e}$ for the norm of the embedding operator\footnote{Compare with $2\sqrt{e}$ for Hankel operators}  was obtained, see Theorem 0.2 there.  

However the proof in the present paper is much simpler than in \cite{Pet-Tre-Vick2007}. Namely, the proof in \cite{Pet-Tre-Vick2007} required some not completely trivial comutations and estimates; in the present paper all the computations (modulo known facts such as Lemmas  \ref{l.GK_la}--\ref{l.Uchi}) can be done in one's head. 

Using the above mentioned estimate for the Carleson mebedding theorem  from \cite{Pet-Tre-Vick2007}, B.~Jacob, J..~Partington, and S.~Pott obtained in \cite{Jac-Pott-Part-JFA_2010} the estimate $4\sqrt{2e}$   for the reproducing kernel thesis for Hankel operators. Their proof also used Green's formula, but the proof presented here besides giving a better constant is significantly simpler and much more streamlined (in particular, because it does not use the result from \cite{Pet-Tre-Vick2007}). 

Also, Theorem \ref{t.rkt} here can be use to give an explicit constant in the reproducing kernel thesis for the so-called generalized embedding theorem, described below in Section \ref{s.GET}, and in particular for the Carleson emebedding theorem, although for the Carleson embedding theorem it gives worse constant than one obtained in \cite{Pet-Tre-Vick2007}. 


\subsection{Generalized embedding theorem}
\label{s.GET}
Let  $\theta\in H^\infty$ be an inner function, and let $\bK_\theta$ be   the corresponding backward whift invariant subsapce
\[
\bK_\theta := H^2 \ominus \theta H^2. 
\]
It is well known (see for example Projection Lemma in \cite[p.~34]{Nik-shift}) and is easy to prove that the orthogonal projection $P_\theta$ from $H^2$ onto $\bK_\theta$ is given on the unit circle $\T$  by 
\begin{equation}
\label{P_theta}
P_\theta f =  f - \theta \bP_+(\overline \theta f)  = \theta \bP_-(\overline \theta f), \qquad f\in H^2. 
\end{equation}
Let $(\cX, \mu)$ be a measure space, and $\theta_\lambda$, $\la\in\cX$ be a measurable family of inner functions (meaning that the function $(z, \la)\mapsto \theta_\la(z)$ is  measurable). The equality \eqref{P_theta} implies that the projection-valued function $\la \mapsto P_{\theta_\la}$ is measurable (in weak, and so in strong sense), so one can ask on what conditions on the measure $\mu$ the following \emph{generalized embedding theorem}
\begin{equation}
\label{GET}
\int_\cX \| \bP_{\theta_\la} f \|_{H^2}^2 d\mu(\la) \le C \|f\|_{H^2}^2 \qquad \forall f\in H^2
\end{equation}
holds. 

Note that if $\theta$ is an elementary Blaschke factor, 
\[
\theta(z) =\frac{z-\la}{1-\overline\la z} , 
\]
then the corresponding space $\bK_\theta$ is spanned by the reproducing kernel $k_\la$, and 
\[
P_\theta f = (f, k_\la)k_\la = (1-|\la|^2)^{1/2} f(\la) k_\la, 
\]
so $\|P_\theta f\|_2^2 = (1-|\la|^2) |f(\la)|^2$. 

Therefore for $\cX=\D$ and $\theta_\la(z) = (z-\la)/(1-\overline\la z)$, $\la\in\D$, the estimate \eqref{GET} reduces to the classical Carleson embedding theorem, and \eqref{GET} holds if and only if the measure $(1-|\la|^2 )d\mu(\la)$ is Carleson. 

Define a Hankel operator $\Gamma: H^2\to H^2_-(L^2(\mu))$ by 
\[
\Gamma f (z, \la) = \Gamma_{\overline{\theta_\la}} f (z) = \bP_-(\overline{\theta_\la} f)(z), \qquad f\in H^2, \quad z\in\T, \ \la\in\D. 
\]
It follows from \eqref{P_theta} that  
\[
\| \Gamma f (\fdot, \la)\|_2 = \|P_{\theta_\la} f\|_2, 
\]
so \eqref{GET} is equivalent to the estimate $\|\Gamma\|\le \sqrt{C}$.

But for Hankel operators the reproducing kernel thesis holds, and Theorem \ref{t.rkt} implies that if 
\[
\int_\D \|P_{\theta_\la} k_a\|_2^2 d\mu(\la) \le A\|f\|_2^2 \qquad \forall a\in \D, 
\]
then \eqref{GET} holds with $C=4eA$. 

The fact that the reproducing kernel thesis holds for the generalized embedding theorem (with some constant) was proved in \cite{Treil_Embed_AA-1989}; the above reasoning connecting \eqref{GET} and the boundedness of the Hankel operator $\Gamma$ is essentially taken from there. Theorem \ref{t.rkt} from the present paper just gives us an explicit constant. 

It also gives a simpler proof of Theorem 0.2 from \cite{Pet-Tre-Vick2007} (reproducing kernel thesis for the Carleson emebedding theorem) but with a worse constant ($4e$ vs $2e$ in \cite{Pet-Tre-Vick2007}\footnote{if one considers the  norms of the embedding operators one should take square roots of the above constants}).

\def\cprime{$'$}
\providecommand{\bysame}{\leavevmode\hbox to3em{\hrulefill}\thinspace}


\begin{thebibliography}{10}

\bibitem{Bonsall1984}
F.~F.~Bonsall, \emph{Boundedness of {H}ankel matrices}, J. London Math. Soc.
  (2) \textbf{29} (1984), no.~2, 289--300.  
  
\bibitem{Jac-Pott-Part-JFA_2010}
B.~Jacob, J..~Partington, and S.~Pott, \emph{Weighted
  interpolation in {P}aley-{W}iener spaces and finite-time controllability}, J.
  Funct. Anal. \textbf{259} (2010), no.~9, 2424--2436.


\bibitem{NazTreVolPis-2002}
F.~Nazarov, G.~Pisier, S.~Treil, and A.~Volberg, \emph{Sharp estimates in
  vector {C}arleson imbedding theorem and for vector paraproducts}, J. Reine
  Angew. Math. \textbf{542} (2002), 147--171.
  
  

\bibitem{Nik-shift}
N.~K.~Nikol{\cprime}ski\u{\i}, \emph{Treatise on the shift operator},
  Grundlehren der Mathematischen Wissenschaften [Fundamental Principles of
  Mathematical Sciences], vol. 273, Springer-Verlag, Berlin, 1986, Spectral
  function theory, With an appendix by S. V. Hru\v s\v cev [S. V. Khrushch\"ev]
  and V. V. Peller, Translated from the Russian by Jaak Peetre.  
  
  
  
\bibitem{Pet-Tre-Vick2007}
S.~Petermichl, S.~Treil, and B.~Wick, \emph{Carleson potentials
  and the reproducing kernel thesis for embedding theorems}, Illinois J. Math.
  \textbf{51} (2007), no.~4, 1249--1263.
  
  
\bibitem{Treil_Embed_AA-1989}
S.~Treil, \emph{Hankel operators, embedding theorems and bases of co-invariant
  subspaces of the multiple shift operator}, Algebra i Analiz \textbf{1}
  (1989), no.~6, 200--234. 


\end{thebibliography}
\end{document}